\newtheorem{theorem}{Theorem}
\DeclareMathOperator{\NumPDESolve}{NumSolve}
\DeclareMathOperator{\NNOperator}{NNOperator}
\newcommand{\vx}{\bm{x}}
\begin{document}
\title{Numerical PDE solvers outperform neural PDE solvers}
\author{Patrick Chatain \and Michael Rizvi‑Martel \and Guillaume Rabusseau \and Adam Oberman}
\date{\today}

%
\maketitle

\begin{abstract}
We present DeepFDM, a differentiable finite‑difference framework for learning spatially varying coefficients in time‑dependent partial differential equations (PDEs). By embedding a classical forward‑Euler discretization into a convolutional architecture, DeepFDM enforces stability and first‑order convergence via CFL‑compliant coefficient parametrizations. Model weights correspond directly to PDE coefficients, yielding an interpretable inverse‑problem formulation. We evaluate DeepFDM on a benchmark suite of scalar PDEs: advection, diffusion, advection–diffusion, reaction–diffusion and inhomogeneous Burgers’ equations—in one, two and three spatial dimensions. In both in‑distribution and out‑of‑distribution tests (quantified by the Hellinger distance between coefficient priors), DeepFDM attains normalized mean‑squared errors one to two orders of magnitude smaller than Fourier Neural Operators, U‑Nets and ResNets; requires 10–20× fewer training epochs; and uses 5–50× fewer parameters. Moreover, recovered coefficient fields accurately match ground‑truth parameters. These results establish DeepFDM as a robust, efficient, and transparent baseline for data‑driven solution and identification of parametric PDEs.

\end{abstract}

\section{Introduction}
Neural networks have recently been applied in various ways to solve Partial Differential Equations (PDEs), \cite{Karniadakis_2021,lu2021deepxde,Li:2020aa,Li:2020ab}.  Neural network methods offer a highly flexible and adaptive approach to solving PDEs, compared to traditional numerical PDE solvers, as they can work with a wide variety of equations and  input data. 
They are more flexible than standard numerical solvers, because they treat the problem of solving PDEs as a problem of learning from data. This model-free approach allows for PDEs to be solved without explicit knowledge of the coefficients (or even the terms) in the PDE.  Moreover, they allow users to compute approximate solutions without the technical skills needed to implement standard PDE solvers.  As a result, these methods have been very popular, with thousands of recent citations for the works cited above. 
However, the learned operators do not correspond to solution operators of the underlying PDE from which the data was generated. 

While the neural network approaches  have been extensively compared against each other, they have not, until now, been carefully compared to established numerical PDE methods.  In this article we set out to carefully compare these approaches.  

Reproducible code for this paper can be found at \url{https://github.com/adam-oberman/deep-pdes}
\subsection{Benchmark PDEs}\label{pdesIC}
It is necessary to specify a class of PDEs to be solved in order to compare the neural network solvers with standard numerical PDE methods. 
The choice of PDEs was based on choosing a maximal subset of those considered in the paper \cite{Li:2020aa}.  Because our goal was to build a single model which could learn any of the included PDEs, we required that they all have the same data structure, in the sense of being scalar equations (rather than vector equations), and time-dependent (rather than stationary).  This required us to exclude the two dimensional Stokes equations, because it is a vector PDE, and the Darcy equation, because it is not time-dependent.  The rest of the PDEs considered in the paper are addressed here. 

In order to better explore the performance differences of the methods, we then extended these PDEs to a wider class, and further studied performance under (i) increased coefficient variability, and (ii) shift in the data-generation distribution (via different Fourier-coefficients priors).  We also implemented one and three dimensional solvers. 

To make the notation concise, we write $\vx$ for the vector spatial variable, which is two dimensional in most cases.  We considered the following PDEs.

Advection equation (non-diffusive), 
\[
\partial_t u(\vx,t)=b(\vx) \cdot \nabla u(\vx,t)
\]
Diffusion equation (inhomogeneous), 
\[\partial_t u(\vx,t)=a(\vx) \Delta u(\vx,t)\]
Advection-Diffusion, $$\partial_t u(\vx,t)=b(\vx) \cdot \nabla u(\vx,t)+a(\vx) \Delta u(\vx,t)$$
Reaction diffusion equation, $$\partial_t u(\vx,t)=a(\vx) \Delta u(\vx,t)+c(\vx) u(\vx,t)(1-u(\vx,t))$$
Burgers' equation (inhomogeneous), $$\partial_t u(\vx,t)=b(\vx) u(\vx,t) \cdot \nabla u(\vx,t)+a(\vx) \Delta u(\vx,t)$$
Here $\nabla$ is the spatial gradient operator and $\Delta$ is the spatial Laplacian. 

We implemented a model which was designed to learn finite difference solution operators from a broader class of PDEs.  When given data corresponding to solutions of one of the PDEs above, the solver did not know the form of the PDE.  Rather, we implemented a single model capable of learning a solution operator for a class of time-dependent PDEs which covers all the cases above.  It is described in the following section.

\subsection{Parametric PDE family}
We considered the nonlinear PDE,
\begin{equation}\label{PDE} \tag{PDE}
    \partial_t u(\vx, t)
    = L\left(u(\vx, t) ; a(\vx) \right)+N\left(u(\vx, t) ; b(\vx)\right)
\end{equation}
where the PDE operator as a combination of linear operators and nonlinear operators parameterized by $a(\vx)$, and $b(\vx)$, which are vectors of PDE coefficients, 
\[
    P\left(u(\vx, t) ; a(\vx), b(\vx) \right)=L\left(u(\vx, t) ; a(\vx) \right)+N\left(u(\vx, t) ; b(\vx)\right)
\]
These coefficients are assumed to be bounded, with known upper and lower bounds.
The linear part of the operator is given by 
\begin{equation}\label{PDEcoeffs}
	\begin{aligned}
			L(u(\vx,t), a(\vx)) &=  a_0 (\vx) + a_1 (x)u(\vx,t)  + a_2 (\vx) \cdot \nabla  u(\vx,t)
   + a_3 (x) \Delta u(\vx,t)
	\end{aligned}
\end{equation}
%
The nonlinear part corresponds to two terms,
\[
N(u(\vx,t) ; b(\vx)) =   b_1(\vx) u(\vx,t)(1-(u(\vx,t))  +  b_2(\vx)u(\vx,t) \nabla u(\vx,t) 
\]
the first is the reaction term, and the second term corresponds to the Burgers' equations.  Whenever this term is nonzero, we ensure that there is also a diffusion term with coefficients bounded away from zero, which ensures the solution is smooth enough.   

 We require that, for all admissible coefficients, the PDE problem is well-posed, in the PDE sense, meaning that for admissible initial and boundary data, $u_0(x)$, there exist unique solutions which are stable in a specified norm. 

The operator above includes, by setting some coefficients to zero,  each of the PDEs discussed in the previous section. 

We focus mainly on the two dimensional case, but we also include a one dimensional and a three dimensional implementation.   We also present one dimensional results below, for illustration purposes.

\section{Related work}

\subsection{Neural PDEs}

Early machine learning methods \cite{rudy2017data} focused on data discovery of PDEs, without building solution operators.   Physics-informed neural networks (PINNs) \cite{Karniadakis_2021,shin2020convergence} were  among the first models to leverage neural networks for solving PDEs.  These methods learn a specific \textit{instance} of the PDE, rather than a solution operator.   

\paragraph{Key Neural PDE Operator learning papers}
\cite{lu2019deeponet} propose the DeepONet architecture, which learn PDE solution operators.  However, in this case, the PDE is fully known, and the PDE residual is included in the loss. Subsequently, \cite{Li:2020ab} proposed a similar approach by learning Green's function for a given PDE. This method gave rise to the Fourier neural operator \cite{Li:2020aa}, which leverages certain assumptions on Green's function to solve the problem in Fourier space. \cite{liu2022predicting} build neural network models which integrate PDE operators directly in the model's architecture, while retaining the large capacity neural network architecture.

There are too  many recent works on neural PDEs to mention. \cite{zhang_neuralpdesolver_2024} lists over seven hundred papers, most of them from the last three years, including survey and benchmark papers. We mention a few relevant papers:  \cite{DBLP:conf/nips/TakamotoPLMAPN22}, provide a benchmark dataset and interface for learning PDEs.  Solver in the loop \cite{DBLP:conf/nips/UmBFHT20} integrate NN methods with a PDE solver.  ClimODE \cite{DBLP:conf/iclr/VermaH024} solves an advection equation with source.


 
\paragraph{Neural-networks architectures related to differential equations}
Several works connect neural network architectures and solution operators for differential equations. \cite{chen2018neural} proposed a neural network architecture based on ODE solvers and \cite{Haber_2017} focused on the stability aspects of the architecture.  Also have Neural SDEs \cite{tzen2019neural}. 
\cite{Ruthotto2020} proposed network architectures based on discretized PDE solvers, however they do not learn PDE operators.  The early contribution \cite{long2018pde} represents a PDE solution operator as a feed-forward neural network, and learns both an approximation to the PDE coefficients as well as the solution operator from the data, however this method had very low accuracy. 

\subsection{Inverse Problems}
Neural PDE operators aim to learn to solve a given PDE from data, without assuming that the form of the PDE is known.  In contrast, the PDE inverse problem approach assumes that a specific form of the PDE in known, but that the coefficients are unknown.
  More precisely, PDE  inverse problems  \cite{taler2006solving} aim to infer unknown parameters of a PDE with a known form, using a dataset of PDE solutions. 
 While the inverse problem approach is compatible, in theory, with any numerical or neural PDE method, the drawback is the specialized nature of each solver,  with custom code and custom optimization routines.

\paragraph{Numerical inverse problems}
We first  discuss the approach which uses forward PDE solvers to learn the PDE coefficients. 
The PDE approach uses numerical PDE solvers, such as  Finite Element Methods and Finite Difference Methods~\cite{Stigg}. These methods discretize the PDEs over the domain into a system of equations that can be solved numerically. 
These  solvers require knowledge of the full equation governing the process of interest, and operate on structured input data, which can limit their applicability and adaptability to a wider range of scientific problems.  For inverse problems, numerical PDE solvers are often used in combination with gradient-based optimization techniques.  These are implemented in packages such as \cite{comsol2023,logg2012automated,2020SciPy-NMeth,doi:10.1137/16M1081063}.  
These methods are computationally intensive, and often require customized code for each problem formulation. However, when combined with proper regularization and optimization strategies, they provide accurate and reliable solutions.

 \paragraph{Neural inverse problems} 
 There are a number of works in neural inverse problems.  (These works differ somewhat from ours, in that they focus on a single problem at a time, rather than developing a methodology for solving a wide class of inverse problems - they have not applied the same method to a number of benchmark problems). 
  \cite{DBLP:conf/icml/ZhaoLW22} solve PDE inverse problems, such as waveform inversion.  In this case, the forward solver is given by a graph neural network or by a U-Net.  They report faster solution times, compared to using the Finite Element Method for the forward solver.  Even using neural networks, their approach has the limitation that changes to the PDEs require training a new forward solver. \cite{huang2022efficient} treat inverse problems for Darcy and Navier Stokes.
\cite{jiao2024solving} use DeepONets as a solver in a Bayesian Markov Chain Monte Carlo (MCMC) approach to PDE inverse problems, to learn from noisy solutions of a diffusion equation. 
\cite{zhang2024bilo} solves inverse problems using a PINN approach for the forward solver.
  
A second approach is the Bayesian inference approach to inverse problems \cite{stuart2010inverse}: this one is more appropriate for problems with uncertainty in the model and noise in the data, it does not require PDE solvers. \cite{cao2023residual} solve Bayesian Inverse Problems, they find that using neural networks for the forward solver are faster but less acurate, compared to traditional scientific computing solvers, so they implement a hybrid approach.

\section{Neural Operators as learned PDE solvers}\label{sec:learningPDEsolvers}
In this and the following section, we explain the neural PDE operators, followed by an explanation of the learned numerical PDE solver approach. 

 A neural PDE operator such as FNO also takes as input a dataset of time slices of solutions of \eqref{PDE}.  It learns to fit the data, using an architecture which is compatible with PDE solutions (e.g. by enforcing a smoothness to the solutions).  However, it does not learn a specific PDE solution operator, and it may not preserve important properties of the solution operator (e.g. linear superposition, or the maximum principle, or conservation of mass, etc.).   Moreover, as we demonstrate below, the FNO solution does not generalize as well as the numerical PDE solution.  

Given the same data, the parametric finite difference solver, 
learns to fit the data to a solution operator for the time-dependent PDE, \autoref{PDE}, above, by performing linear regression over the coefficients of each term of the PDE family.

\subsection{Data generation for PDE solutions}

We are given a dataset consisting of sample values functions of the form $u(\vx, t)$ for $\vx$ on a grid, and for several equally spaced values (time slices) of $t$.  So let $G=[0,1]^2$ be the unit square and consider a uniform grid, $G^x$ in space of resolution determined by $N_{\vx}$. The number of output time slices are given by $N_T$, uniformly spaced time values from $[0, T]$. Then grid functions are written
$$
\begin{aligned}
U_{0, i}\left(\vx_j\right) & =u_{0, i}\left(\vx_j\right), & & x \in G^x \\
U_i\left(\vx_j, t_k\right) & =u_i\left(\vx_j, t_k\right), & & x \in G^x, t_k \in T^h
\end{aligned}
$$
Thus the dataset is, $S^m = \{ U_1, \dots, U_m \}$ where each element 
$$
U_i(X,T) =  (U_i(X,0), U_i(X,t_1), \dots, U_i(X,t_k))
$$ 
consists of a vector of grid values of a PDE, one for each time, $t \in T = (0,t_1,\dots, t_k=T)$.  
The dataset comes from two sources. First, when available, we use  benchmark data provided by code from neural PDE papers.  However,  
we also generate data by  solving the numerical PDE solver \eqref{NumPDE}, on a higher resolution grid, and then coarsen (upsample), onto the desired grid.  Since the solver is accurate, it approximates the PDE solution.   We performed a consistency check that the accuracy of the methods was the same, whether we used the benchmark data or generated data.  This method was used to study accuracy of solvers when we varied the coefficients (higher variability), when we consider solutions generated by a different distribution, and when we considered new PDE terms. 

\subsection{Learning neural PDE solvers}
The neural PDE solver corresponds to a neural network architecture, with weights $W$.  
A forward pass (fixed $W$),  maps initial grid data $U_0$, to a vector of time slices $U(X,T)$.
\begin{equation}
\label{NNSolve}
	U(X,T) = \NNOperator(U_0; W)
\end{equation}
  The neural network architecture is designed to be amenable to fitting PDE solutions, however it does not encode a PDE solution operator.   (We study the FNO solver bias in \autoref{subsec:FNOerrors}.) 
  The neural network learns the solution operator by fitting the data, using mean squared (or some other appropriate) loss,
\begin{equation}
\widehat W = \arg	\min_W \sum_{U_i \in S} \| U_i - \NNOperator(U_i(X,0); W) \|_X^2
\end{equation} 
Once the neural network is trained, then the final weights $\widehat W$ are used for a forward pass, in the form of $\NNOperator(U_i(X,0); \widehat W)$,  which corresponds to the approximate solution operator.


\section{Learned numerical PDE solvers}

The numerical PDE  solver takes advantage of existing neural network infrastructure to implement a solver for a family of PDEs in the form of  a feed-forward convolutional neural network.  The parameters of the neural networks correspond to the coefficients of each of the linear and nonlinear terms of the PDE operator.

The parameters of the model correspond to grid values of the PDE parameters,  $a(x),b(x)$.  
When the parameters are fixed, the solver takes as input the initial value, $u(x, 0)=u_0(x)$, on a fixed grid, and outputs an approximate finite difference  solution $u(x, t)$, at specified time slices. 
 
Learning the parameters from a dataset of PDEs solutions, taken at time slices and on grid values, corresponds  to learning the coefficients of a numerical PDE solution operator.  Thus it takes as input a dataset of time slices of solutions of \eqref{PDE}, and learns grid values of PDE coefficients $a(x), b(x)$ for the solver. 
 In this case, training corresponds to learning which terms are active in the PDE, and what are the values of the coefficients of those terms.  In many cases the correct parameters are identified. See \autoref{learn_coefs}.
 However, since learning coefficients is in general a harder problem than learning solutions operators, \cite{isakov2006inverse} there are cases where an accurate solution operator may learn equivalent solution operators with different coefficients. 

Stated differently, the parametric PDE learning problem, (which is a type of PDE inverse problem) corresponds to the following. The input data set is assumed to be a solution of \eqref{PDE} with unknown but bounded coefficients.    A given benchmark problem would have most of the coefficients set to zero.  However, each training run assumes all the coefficients can be non-zero, unless otherwise specified. 

DeepFDM is implemented as a feedforward convolutional neural network, where each forward pass corresponds to an implementation  of a scientific computing solver, ${\NumPDESolve}(U; A(X))$, of \eqref{PDE}, where $A(X)$ corresponds to the unknown vector of coefficients.   In other words, for a given vector of coefficients, a forward pass with be a numerical solution of the corresponding PDE.  The numerical PDE operator is implemented using stable finite differences.  It is parameterized by the grid values, $A(X)$, of the coefficients $a(x)$ of the PDE.
The numerical solution operator is written,  
\begin{equation}
\label{NumPDE}
	U(X,T) = {\NumPDESolve}(U_0; A(X))
\end{equation}

Given the dataset $S^m$, training the DeepFDM network corresponds to fitting the coefficients to the data.
\begin{equation}
\label{PDE_Inverse}
	\widehat A = \arg\min_{A \in \mathcal  A} \sum_{U_i \in S} \| U_i - {\NumPDESolve}(U_i(X,0); A) \|_X^2
\end{equation} 

Using machine learning terminology, the method \eqref{PDE_Inverse} is \emph{interpretable by design}: the parameters of the model correspond to the coefficients of the PDE. 

Since DeepFDM corresponds to learning parameters of the PDE, we expect that overfitting will not be a problem, and since a forward pass corresponds to an accurate numerical solution of the PDE, we expect that it will be accurate.  However, there will still be errors associated with finite data, so we expect $\widehat A$ to approximate, but not be equal, to $A^*$, the grid values of the coefficients. The problem \eqref{PDE_Inverse} corresponds to the parametric vector regression problem. The problem corresponds to \emph{parameter identification}.  This is more tractable and easier to analyze than  the neural network problem~\eqref{NNSolve}. 

In the next section, we have a theorem which characterizes the method, in terms of parametric regression.  In a later section, we show that \eqref{NNSolve} can be biased.

\section{Parametric Regression problem for the numerical PDE solver}

For a given grid, $X$, let $h(X)$ be the grid resolution. 
Regarding a function on a grid as an approximation, we define $\| U \|_X = h(X)^2 \|U\|_2$ (in two dimensions).   The scaling factor is a normalization that ensures constant functions have the same norm, regardless of the grid resolution. 
 
\begin{theorem}
Let $A^*$ be the grid values of the true PDE parameters, $A^* = a(X)$.  
The numerical PDE learning problem corresponds to  a vector regression parametric learning problem  	
\[
\min_{A \in \mathcal A } \sum_{U_i \in S} \| {\NumPDESolve}(U_i(X,0); A) -  {\NumPDESolve}(U_i(X,0); A^*)- \epsilon_i \|_X^2
\]
with noise vector, $\epsilon_i$,  whose norm goes to zero with the grid resolution,
 $\max_i \|\epsilon_i\|_X  =  \mathcal O(h(X))$. 
\end{theorem}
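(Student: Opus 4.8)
The plan is to recognize that, after a single substitution, the theorem is a statement about the \emph{discretization error} of the finite-difference scheme evaluated at the ground-truth coefficients. I would define
\[
\epsilon_i \;:=\; U_i \;-\; \NumPDESolve\big(U_i(X,0);\,A^*\big),
\]
so that, tautologically, $U_i = \NumPDESolve(U_i(X,0);A^*) + \epsilon_i$. Inserting this identity into the DeepFDM training objective \eqref{PDE_Inverse} rewrites each summand as
\[
\big\|\,U_i - \NumPDESolve(U_i(X,0);A)\,\big\|_X^2
\;=\;
\big\|\,\NumPDESolve(U_i(X,0);A) - \NumPDESolve(U_i(X,0);A^*) - \epsilon_i\,\big\|_X^2,
\]
which is exactly the stated parametric vector-regression problem. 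Hence the only real content is the bound $\max_i\|\epsilon_i\|_X = \mathcal O(h(X))$: the scheme, run with the true coefficients, must reproduce the sampled solution trajectory to first order in the grid spacing — and this must hold whether $U_i$ comes from a benchmark generator or from a high-resolution solve that is subsequently coarsened, in which latter case the fine-grid error is of higher order and is absorbed into the estimate.

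For the bound I would run the classical Lax-equivalence argument on the map $\NumPDESolve(\,\cdot\,;A^*)$. \emph{Consistency}: the data $U_i$ are grid samples of a solution $u_i$ of \eqref{PDE} that is smooth — using the standing hypothesis that the problem is well-posed for all admissible coefficients and that a nondegenerate diffusion term is present whenever the Burgers' or reaction terms are active — so a Taylor expansion of the forward-Euler step and of the difference stencils shows the scheme is first-order consistent. \emph{Stability}: the CFL-compliant parametrization of the coefficients is designed precisely so that the discrete one-step evolution operator is nonexpansive in the relevant norm — a discrete maximum principle for the linear convection--diffusion part (from the sign and CFL constraints on $a_2,a_3$), extended to the full operator by an energy / Gr\"onwall estimate once the locally Lipschitz nonlinearity $N$ is included. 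Accumulating the per-step local truncation errors over the $\mathcal O(T/\Delta t)$ steps, stability converts them into a global error bounded by $C(T)\,h(X) = \mathcal O(h(X))$, and taking the maximum over the finitely many $U_i \in S^m$ preserves the rate.

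The substitution being purely formal, the main obstacle is the stability estimate for the full parametric family rather than for a single equation. For the linear terms the discrete maximum principle is immediate from the CFL constraints; but once the Burgers' and reaction nonlinearities are active one must first establish an a priori $L^\infty$ bound on the discrete iterates so that $N$ can be treated as globally Lipschitz along the trajectory, and then control the Gr\"onwall constant $C(T)$ in terms of that bound and the diffusion lower bound — this is where the nondegenerate-diffusion hypothesis does its work. A secondary but necessary piece of bookkeeping is to track the CFL scaling ($\Delta t \lesssim h$ in the advective regime, $\Delta t \lesssim h^2$ in the diffusive one) carefully enough to confirm that the time-stepping and spatial-stencil contributions genuinely combine to first order in $h(X)$, and to verify that the grid normalization built into $\|\cdot\|_X$ introduces no resolution-dependent constants into the final estimate.
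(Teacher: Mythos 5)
Your proposal is correct and follows essentially the same route as the paper: you define $\epsilon_i$ as the error of the finite-difference solver run with the true coefficients $A^*$ against the sampled exact solution, substitute this identity into the training objective \eqref{PDE_Inverse} to obtain the stated regression form, and invoke first-order convergence for the bound $\max_i\|\epsilon_i\|_X=\mathcal O(h(X))$. The only difference is one of detail: the paper simply cites standard finite-difference approximation bounds for this last step, whereas you sketch the underlying consistency-plus-stability (CFL, discrete maximum principle, Gr\"onwall) argument, which is a reasonable expansion of the same idea rather than a different proof.
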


\begin{proof}
Given the \eqref{PDE}, write $u(x,t) = \text{PDESolve}(u_0, a(x))$ for the solution of the PDE, with initial data $u_0(x)$.

Define $$ \epsilon_i = {\NumPDESolve}(U_0; A(X)) - \text{PDESolve}(u_0, a(x))(X)$$ 

Standard PDE finite difference numerical approximation bounds \cite{Stigg} can then be expressed as  	$\|\epsilon_i   \|_{X}^2 = \mathcal O (h(X))$, where we assume first order accuracy.   The PDE solution $u(x,t)$ when evaluated on the grid, corresponds to $U_i$. 
Thus we have 
 satisfies $U_i - {\NumPDESolve}(U_i(X,0); A^*) = \epsilon_i$, where $\epsilon_i$ represents the numerical solver error, which has norm on the order of the grid resolution $\|\epsilon_i\|_X  =  \mathcal O(h(X))$, as desired. 
\end{proof}

The small amount of noise means there can be a small error in learning the parameters, but still we expect that the model learn a close approximation of the correct parameters, and should generalize.  Thus, using standard results about regression, this theorem tells us that we expect a nearly unbiased approximation to the true paramters of the model, with better results as the grid resolution improves. In many cases, for inverse problem, there is theory that ensures machine learning \emph{consistency:} with enough data the solution operator converges to the correct one.  With additional assumptions, the coefficients also converge,~$\widehat A \to A^*$.

\section{DeepFDM model architecture}

\begin{figure}
    \centering
    \includegraphics[height=0.5\textwidth]{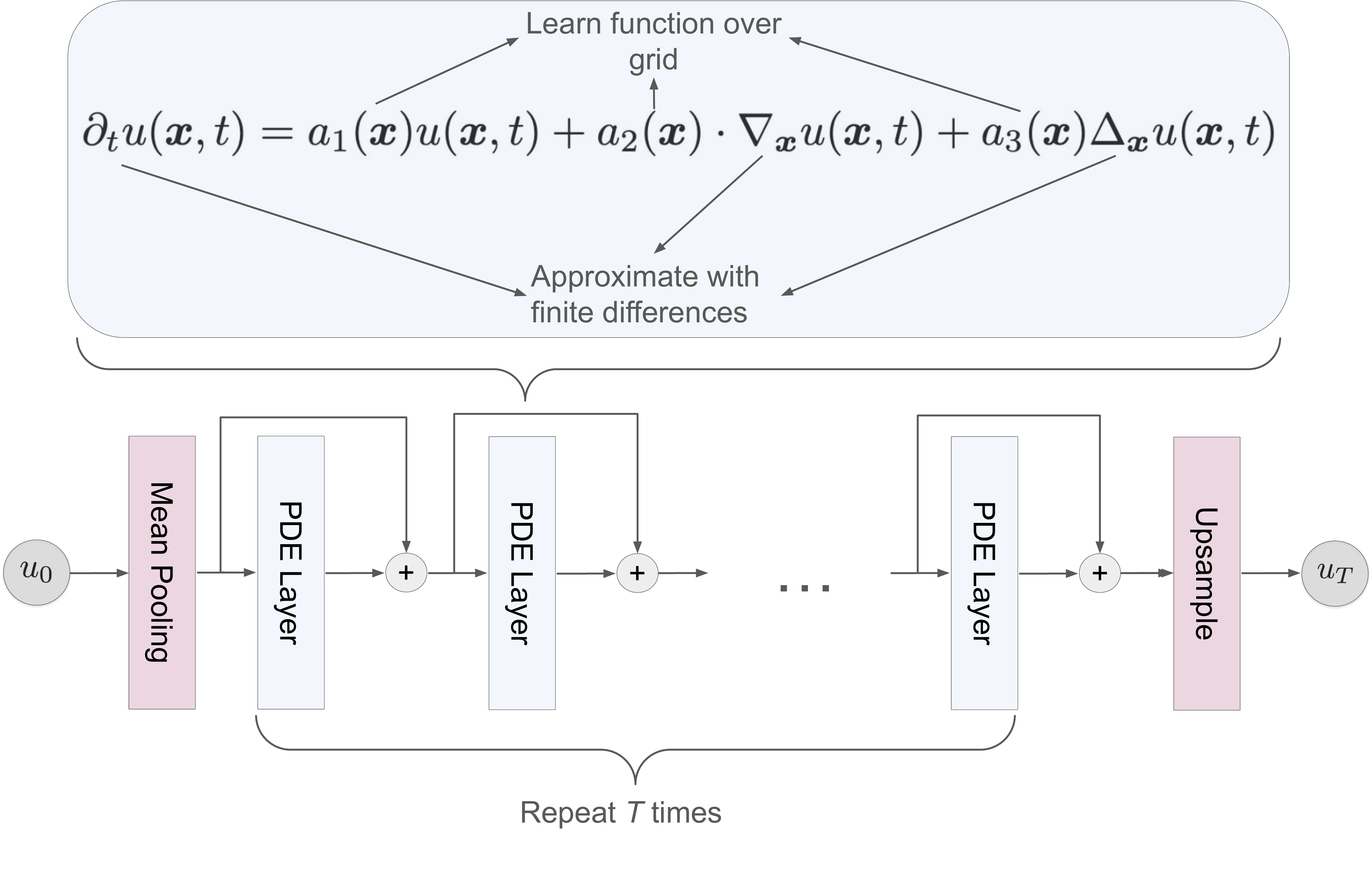}
    \caption{Example network architecture for a linear nonconstant PDE over space and time. The mean pooling layer is used to reduce the resolution of the input and the upsampling layer is used to bring the output back up to size.}
  \label{ArchitectureDiagram}
\end{figure}

DeepFDM can be interpreted as neural network architecture, which implements a stable, consistent finite difference method for solving a parametric family of PDEs, \eqref{PDEcoeffs} as a forward pass.  Training the model corresponds to learning the relevant terms of the PDE, as well as the corresponding coefficients.

DeepFDM is implemented as a feedworward convolutional neural network, where each forward pass corresponds to an implementation  of a scientific computing solver, ${\NumPDESolve}(U; A(X))$, of \eqref{PDE}, where $A(X)$ corresponds to the unknown vector of coefficients.   In other words, for a given vector of coefficients, a forward pass is a numerical solution of the corresponding PDE.  The numerical PDE operator is implemented using finite differences, and the PDE is parameterized by the grid values, $A(X)$, of the coefficients $a(x)$.
The details of the implementation and the use of finite difference schemes are given in \autoref{sec:FD},  for reference. 

Using a neural network architecture is very convenient, since we can take advantage of built-in optimization routines, rather than implementing optimization as is more typical with inverse problems.  Moreover,   a forward pass is very computationally efficient, since we are working with a deep but small convolutional neural network.    Training is also faster than for benchmark neural solvers, see the results section below.

The model architecture is an implementation of the finite difference solver for the PDE \eqref{PDE}.  In this case, the finite difference operators are implemented as convolution with fixed (predefined, non-learnable) operators.  The coefficients correspond to model weights passed through a sigmoid nonlinearity (to make them bounded).  
This allows the finite difference solver,  to be implemented as a differentiable model, and trained using standard SGD implementations (see the results section below). 
The architecture is illustrated in \autoref{ArchitectureDiagram}, (where the PDE is written using different notation), more details of the architecture can be found in \autoref{sec:arc2}.

\subsection{Inverse problems with a convergent finite difference solver}\label{sec:FD}
 In this section, we demonstrate the finite difference operator in a simple case and give an idea of how to build learnable finite difference operators in the general case.

A fundamental result in numerical approximation of linear PDEs~\cite{courant1928partiellen,Stigg} provides conditions on the time and grid discretization parameters,  $ {c_t},{c_x}$, in terms of bounds on the coefficients $a(\vx)$ which ensure that the method is numerically stable, and convergent. 
\cite{oberman2006convergent}, extended the family of stable finite difference operators to a wide class of diffusion-dominated PDEs.
The convergence theory states that
as the resolution of the data increases, the solution operator converges to the PDE solution operator. 
$\lim_{\epsilon\to 0} \|h^{\epsilon} - h^*\| = 0$,  in the appropriate operator norm.

\subsubsection{Consistent finite difference operators on grids}
A finite difference operator on a grid is an approximation of a differential operator. The finite difference operator corresponding to $u_{\vx}$ on a uniform grid is represented by a convolution operator, with kernel $W = \frac{1}{c_x}[-1,1]$,  where we have replaced $\epsilon$ with the grid spacing parameter, $c_x$.  We have a similar operator in two dimensions. 

Finite difference approximations of the Laplacian   in one dimension,  in two dimensions, correspond, respectively, to convolutions with the following kernels
\begin{equation}
    \label{kernels}
W_{Lap}^{d=1} = \frac{1}{c_x^2}[1,-2,1],
\qquad 
W_{Lap}^{d=2} = 
\frac{1}{c_x^2}\begin{bmatrix}
0 & 1 & 0\\
1 & -4 & 1\\
0 & 1 & 0
\end{bmatrix}.
\end{equation}
 These operators are linearly combined to approximate each of the linear terms, $L(u,a)$, in the linear part of the PDE.
 
 To approximate nonlinear terms, 
 we use upwind nonlinear finite difference operators, \cite{oberman2006convergent}, which showed that 
it is possible to build \emph{numerically stable} finite difference approximations for a wide class of nonlinear elliptic and parabolic PDEs. (For example, a stable approximation of the  the eikonal operator, $|u_x|$, is given by the maximum of the upwind finite difference schemes for $u_x$ and $-u_x$, respectively.)


\subsection{Stable, monotone discretizations}\label{modeldeets}
Each layer of the operator corresponds to a discretization of a PDE.  We need this discretization to be convergent, which puts requirements on the hyperparameters in the model, and how they relate to the possible coefficients. Here we discuss the special case of the heat equation, for clarity of exposition.  

When solving any PDE numerically, we are bound by some stability constraints that are necessary for obtaining a convergent solution. For the heat equation, assuming we take space intervals of $c_x$ (and equal in all dimensions) and time intervals of $c_t$, we are bound by the stability constraint $0\leq a(\vx)\cdot \frac{c_t}{c_x^2}\leq \frac{1}{2\cdot D}$ where $D$ is the dimension of the data,~\cite{courant1967partial}.  Thus when one knows the coefficients $a(\vx)$ then one can simply pick $c_t$ and $c_x$ to satisfy the stability constraint.

In this case, we take the opposite approach. Given fixed values of $c_x$ and $c_t$, we can bound the coefficients themselves by 

\begin{equation}\label{boundcoefs}
\tag{CFL}
    0  \leq a(\vx)\leq C_{a} = \frac{c_x^2}{2D\cdot c_t}
\end{equation}

This is a crucial constraint since the parameters of the model will take the place of the coefficients of the equation being modelled. In this way, we design DeepFDM precisely with the aim of learning the physical process that is trying to approximate. 

In order to satisfy the stability constraint, we bound the raw parameters learned by the model with a scaled sigmoid function. This is, if the model's parameters are $\theta$, then the values that we multiply with the convolution layer (corresponding to the Laplace operator) are given by $C_a \cdot \sigma(\theta)$. This ensures that the parameters are bounded by the stability region of the PDE and thus forces DeepFDM to find a solution in the parameter space in which the PDE itself is stable.

\subsection{Example: finite difference heat equation solver}
Here we give the simple example of the one dimensional heat equation, which will be used to illustrate the architecture of the stable solver.  Familiar readers can skip to the general case below.

For $\vx$ in one dimension, use the finite difference approximations, 
$$u_{\vx}(\vx)  \approx \frac{u(\vx + \epsilon)-u(\vx)}{\epsilon}$$
along with 
$$u_{\vx \vx}(\vx)  \approx \frac{u(\vx + \epsilon)-2u(\vx)+ u(\vx - \epsilon)}{\epsilon^2}.$$
We apply this finite difference operator to approximate  the heat equation, 
$ 
u_t = a(x) u_{xx}.
$ 
The forward Euler finite difference method for the heat equation, is \cite{Stigg}: 
\begin{equation}\label{euler}
    U_{j,k+1}  = U_{j,k} 
    + \frac{c_t}{c_x^2} A_j\left( U_{j+1,k} -2U_{j,k} + U_{j-1,k}\right),  
\end{equation}
The full solution operator, at time $t = n c_t$ is obtained by iterating the operator above $n$ times. 
This operator is consistent, it is also stable, provided the CFL condition, \eqref{boundcoefs}, below, holds. 
Below we also include an additional sigmoid nonlinearity applied to the coefficient $A_j$, which ensures that it is bounded for all values of $A_j$.

\subsection{Neural network architecture}\label{sec:arc2}

DeepFDM is formulated to take some initial condition $U_0$ (defined as a function over a grid) and iterate it forward in time for some given number of time steps $T$. For $k = 0, 1, \hdots, T-1$, the iterative update is defined as 
\begin{equation}
	U_{k+1} :=
U_{k} + c_{t} \left (\sum_{i=1}^{n_{A}} \sigma (A_i) \odot \left({\text{conv}(W_i,U_k)}\right) 
+ \sum_{j=1}^{n_B} N_j (B_j,U_k,U_{k-1})
\right) 
\end{equation}
where $\odot$ represents the component-wise product. Here $c_t$ is a constant representing the time step interval, and each $W_i$ is a predetermined (non-trainable) convolution kernel corresponding to a finite difference operator.  These kernels were illustrated in \autoref{kernels}.  The component-wise  $\sigma (\cdot )$ is the sigmoid function. $n_{A} =4 $ and $n_{B} =2$ represent the number of linear and nonlinear terms, respectively.  The grid vectors $A_i, B_j$ are the parameters corresponding to each linear and nonlinear term, respectively.   The linear terms corresponds to standard upwind finite difference discretizations of the derivatives, pointwise multiplied by nonlinearly scaled coefficient terms.  

The $N_j$ represent nonlinear terms. For the reaction operator, for example, this corresponds to a quadratic reaction term and takes the form
$$
N_1(B_1, U) = \sigma(B_1) \odot U \odot  (1-U),
$$ 
which is component wise multiplication. 
The nonlinear term corresponding to the non-constant coefficient  Burgers' operator takes the form
$$
N_2(B_2,U_k,U_{k-1}) =  \sigma(B_2)\odot 
 U_{k-1}\left({\text{conv}(W_{adv},U_k)}\right)$$
where the first term corresponds to a linear advection term, multiplied by $U_{k-1}$. The previous time step value for computing the spatial gradient was used for stability.

By bounding the coefficients with a sigmoid function scaled by the time step interval, our model corresponds by design, for fixed parameter values $\theta$, to a stable finite difference method consistent with a PDE  with coefficients given by the model parameters.  

More PDE terms can be added, for example we experimented with eikonal type terms.  

Each layer is repeated $T$ times, and corresponds to the Forward Euler method, as can be seen in \autoref{ArchitectureDiagram}.

\section{Dataset generation and OOD quantification}\label{sec:data}
In this section, we describe the procedure used to generate the synthetic data we use for testing. We equally explain the notion of OOD shift we consider in this paper. To the best of our knowledge, no other work employs the OOD quantification scheme used in our paper, making it a novel contribution.

\paragraph{Numerical PDE solvers}\label{sec.PDE.solvers}
We implemented a finite difference solver~\cite{Stigg}, which could solve any PDE of form \eqref{PDE}, provided bounds on the coefficients.  The PDE solver uses the forward Euler method, with a small time step, which is calculated from the coefficients and spatial resolution, to ensure stability and convergence.   This solver can be used to generate high resolution PDE solutions, with a given distribution of coefficients.  High resolution PDE solutions were projected onto a coarser grid, which is a standard method for generating approximate PDE solutions.

\paragraph{Data generation process}
To characterize and benchmark DeepFDM against existing architectures, we train on synthetic data generated by PDE solvers. 
\begin{enumerate}
    \item Sample some Fourier coefficients $c \sim \mathcal{N}(0, \Sigma)$ from a Fourier spectrum with at most $N$ modes and compute the resulting function, $U_0$,  with coefficients multiplied by the fourier basis functions. 
    \item Use a standard scientific computing solver to compute the solution to the PDE problem with initial condition $U_0$ for the required number of time steps.
\end{enumerate}


\autoref{1dhellinger} shows examples of  initial conditions generated using different Fourier spectra. 
Figures for one-dimensional data are shown for illustration purposes only, reported results are for two dimensions.

\paragraph{Measuring dataset shift }
We measure the distance between data generating distributions using the Hellinger Distance.
The Hellinger distance between two multivariate, mean zero, normal distributions is given by \cite{cramer1999mathematical}
\[
H^2(\mathcal{N}(0, \Sigma),\mathcal{N}(0, \Tilde\Sigma))
      =1-
      \frac{{\det(\Sigma)^\frac{1}{4}\det(\Tilde\Sigma)^\frac{1}{4}}}
{{\det\left({(\Sigma+\Tilde\Sigma)}/{2}\right)^\frac{1}{2}}}.
\]
\autoref{1dhellinger} shows the Hellinger distances between the distributions which generated the samples.

\begin{figure}
    \centering
    \begin{subfigure}[b]{0.22\textwidth}
        \centering
        \includegraphics[width=\linewidth]{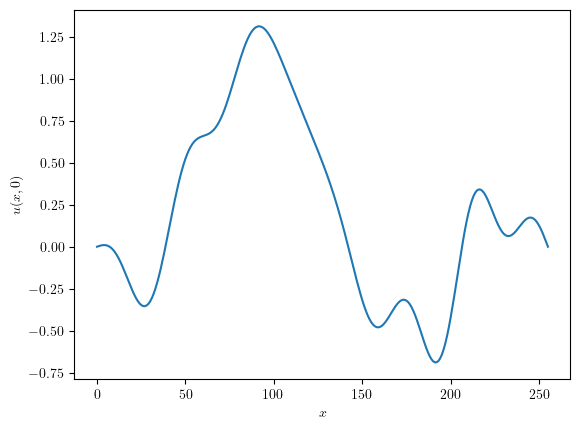}
        \caption{one dimensional training distribution sample}
        \label{1dIDH}
    \end{subfigure}%
    \hfill
    \begin{subfigure}[b]{0.22\textwidth}
        \centering
        \includegraphics[width=\linewidth]{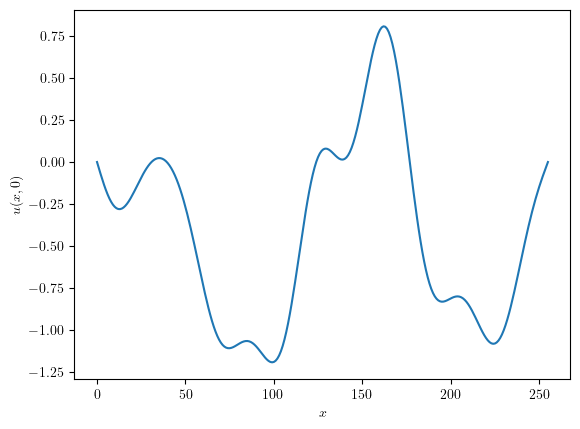}
        \caption{Sample from a distribution with $H^2=0$}
        \label{1d0}
    \end{subfigure}\quad
    \hfill
    \begin{subfigure}[b]{0.22\textwidth}
        \centering
        \includegraphics[width=\linewidth]{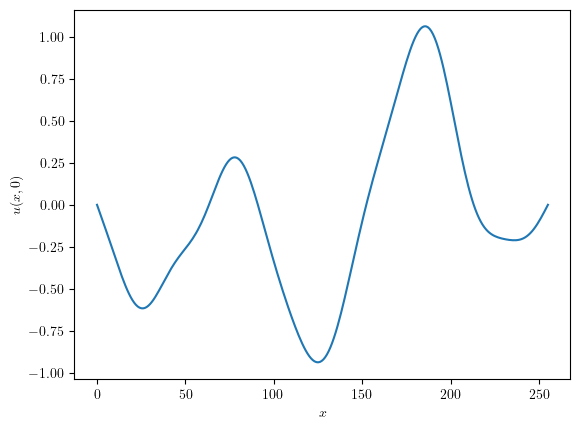}
        \caption{Sample from a distribution with $H^2=0.64$}
        \label{1d0.8}
    \end{subfigure}
    \hfill
    \begin{subfigure}[b]{0.22\textwidth}
        \centering
        \includegraphics[width=\linewidth]{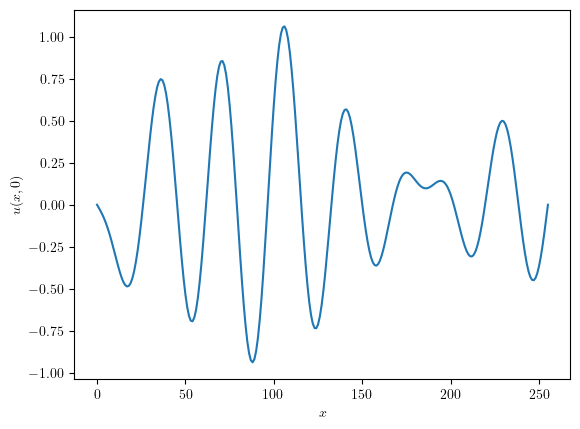}
        \caption{Sample from a distribution with $H^2=0.99$}
        \label{1d0.99}
    \end{subfigure}

    \medskip
    \begin{subfigure}[b]{0.22\textwidth}
        \centering
        \includegraphics[height=1in]{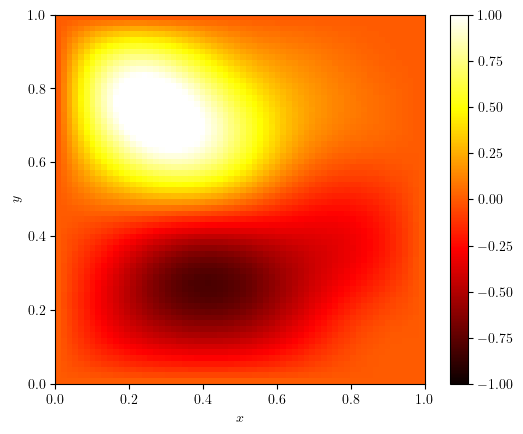}
        \caption{two dimensional training distribution sample}
        \label{2did}
    \end{subfigure}%
    \hfill
    \begin{subfigure}[b]{0.22\textwidth}
        \centering
        \includegraphics[height=1in]{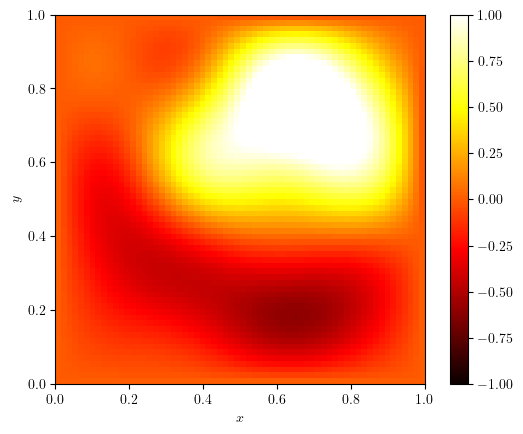}
        \caption{Sample from a distribution with $H^2=0$}
        \label{2d0}
    \end{subfigure}%
    \hfill
    \begin{subfigure}[b]{0.22\textwidth}
        \centering
        \includegraphics[height=1in]{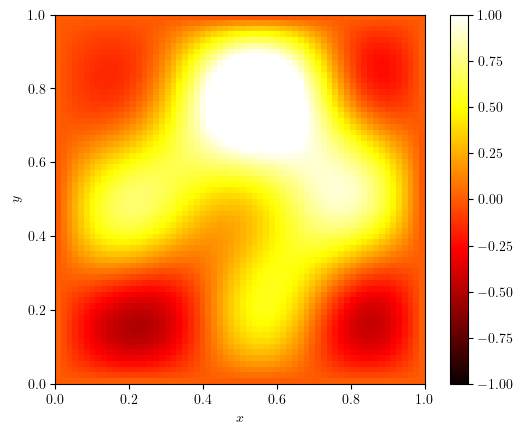}
        \caption{Sample from a distribution with $H^2=0.64$}
        \label{2d0.8}
    \end{subfigure}
    \hfill
    \begin{subfigure}[b]{0.22\textwidth}
        \centering
        \includegraphics[height=1in]{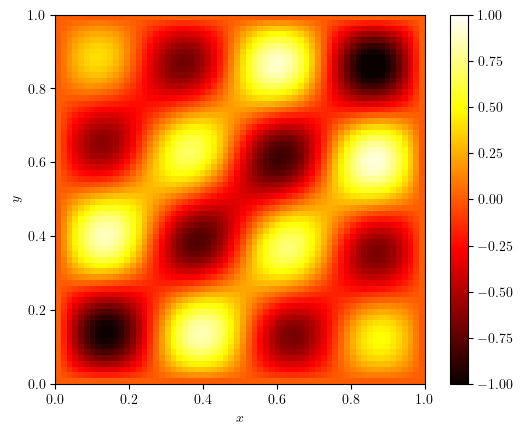}
        \caption{Sample from a distribution with $H^2=0.99$}
        \label{2d0.99}
    \end{subfigure}
  \caption{Examples of training data and test data from different distributions, measured through the Hellinger distance.  Top: one dimensional.  Bottom: two dimensional data.}
  \label{1dhellinger}
\end{figure}

\section{Summary of the main results}
The primary neural network solver benchmark is the Fourier Neural Operator (FNO), from  \cite{Li:2020aa}. This well-known method, has been used as standard benchmark for other neural solver approaches.  We also implement two other methods, for comparison, which illustrates the advantage of the FNO approach over other neural network approaches.

\subsection{Out-of-distribution data} 
We also introduce a principled method for measuring out-of-distribution (OOD) performance of neural operators.  We generate PDE solution data by choosing families of orthogonal functions  with random coefficients for the initial conditions, and passing these functions through a high accuracy numerical PDE solver.  Using different distributions on the coefficients allows us to quantify the distance between the distribution (over functions), in terms of the Hellinger distance (other divergences could be used, this one was convenient for implementation).  These results are illustrated in~\autoref{2dFit}, and quantified in~\autoref{OODfinal}.

We compare the models, using both the PDE solvers metrics and and neural network metrics. These are accuracy in and out-of-distribution, model size (number of parameters), and model training time (measured in epochs).  Our results show that the numerical PDE approach outperforms state of the art neural solvers.  Summary results are presented in  \autoref{tab:model_comparison}.  Details can be found in \autoref{sec:numresults}, below. 

DeepFDM which is based on a parametric PDE solver, has a performance improvement of an order of magnitude better on all metrics, compared to FNO, which is a PDE compatible neural network (FNO outperforms other neural network architectures). Accuracy is expressed an a range, over three linear and two nonlinear PDEs.  Out-of-distribution (OOD) accuracy: uses the accuracy metric as before but over a range of different data distributions. Because the distributions differ, the solutions have been normalized for a fair comparison across distributions.   Parameter count is the number of parameters of the model listed, in the case of a data dimension of 4096, corresponding to the number of grid points.  Training time is the (approximate) number of epochs to reach training loss of $5\times 10^{-4}$.   The different PDEs and corresponding accuracy for each one are provided in \autoref{acc}. 

\begin{table}[h]
\centering
\begin{tabular}{|l|c|c|c|c|}
\hline
\textbf{Model} & \textbf{Accuracy} & \textbf{Accuracy (OOD)} & \textbf{Params } & \textbf{Train Time} \\
\hline
FNO      & 0.02--0.04     & 0.03--0.3       & 184,666  & 120 \\ \hline
DeepFDM  & 0.001--0.004   & 0.0007--0.009   & 20,484   & 10  \\  \hline
Ratio    & 10X--20X       & 30X--40X        & 9X       & 12X \\ 
\hline
\end{tabular}
\caption{Summary comparison between the neural operator approach, FNO, and the PDE discretization approach, DeepFDM (smaller numbers are better in all columns). DeepFDM which is based on a parametric PDE solver, achieves an order of magnitude improvement on all metrics, compared to FNO, which is uses a PDE compatible neural network (FNO outperforms other neural network architectures). Accuracy is expressed in a range over three linear and two nonlinear PDEs.  OOD accuracy: is the same accuracy metric as before but over different data distributions, in addition, because the distributions differ, the solutions are normalized.  Parameter count is for the model with data dimension (number of grid points) of 4096.  Training time is number of epochs to reach training loss of $5\times 10^{-4}$.
}
\label{tab:model_comparison}
\end{table}

The trade-off for using the numerical PDE approach is that broadening the class of PDEs  requires modifications to the architecture. For example, adding different boundary conditions, changing the dimension of the space, or adding new nonlinear terms would require the expertise to implement a finite difference discretization of the terms.  While this is standard, it requires a domain expertise which is not required for  the neural operator approach.   On the other hand the neural operator approach performs worse, and has no guarantees that the solution operator is compatible with a PDE solution operator.  In fact basic tests, such as the linear superposition property fail for the neural operator approach.

\begin{table}[h]
    \centering
    \begin{tabular}{|l|c|c|c|c|}
        \hline
        PDE   & ResNet & U-Net & FNO & DeepFDM\\
         \hline
        Diffusion equation & 0.6149 & 0.0640 & 0.0266 & \textbf{0.0024} \\
        Advection equation & 0.7039 & 0.0618 & 0.0251 & \textbf{0.0007}\\
        Advection-diffusion & 0.6286 & 0.0692 & 0.0307 & \textbf{0.0017}\\
        Reaction-diffusion& 0.9119 & 0.0521 & 0.0319 & \textbf{0.0016}\\
        Burgers' equation & 0.8517 & 0.0790 & 0.0379 & \textbf{0.0045}\\
        \hline
    \end{tabular}
    \caption{Test error (normalized MSE) of our model and various benchmarks on a different PDEs.  The results reported are the average over three training runs. The PDE solver DeepFDM method outperforms neural operators on accuracy by approximately one order of magnitude.    
    }
    \label{acc}
\end{table}

\begin{figure}
    \centering
    \begin{subfigure}[b]{0.5\textwidth}
        \centering
        \includegraphics[height=2.in]{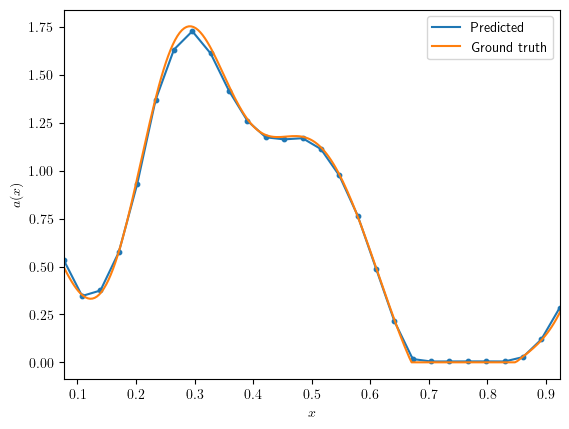}
        \caption{1D diffusion process}
        \label{1dcoefs}
    \end{subfigure}%
    \hfill
    
    \medskip
    \begin{subfigure}[b]{0.45\textwidth}
        \centering
        \includegraphics[height=2.in]{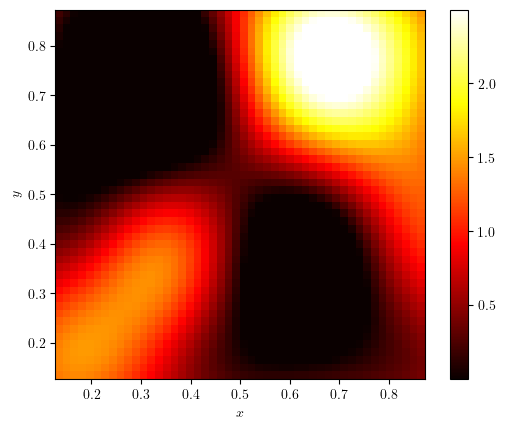}
        \caption{2D ground truth coefficients}
        \label{diffact}
    \end{subfigure}%
    \hfill
    \begin{subfigure}[b]{0.45\textwidth}
        \centering
        \includegraphics[height=2.in]{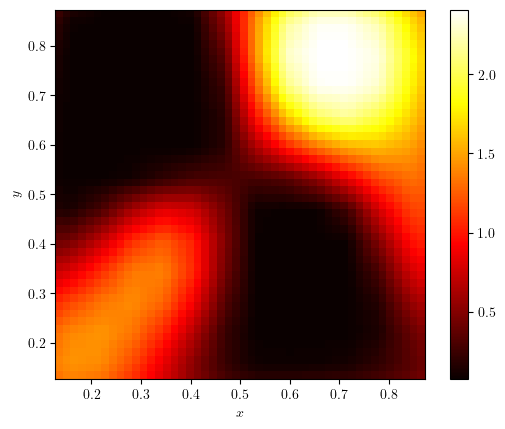}
        \caption{2D learned coefficients}
        \label{difflearn}
    \end{subfigure}%
    \hfill
  \caption{DeepFDM learns a convergent finite difference discretization of a PDE, with unknown  terms and coefficients corresponding. (a) Exact and learned coefficients for a one dimensional heat equation. Exact (b), and learned (c) coefficients of a two dimensional heat equation.}
  \label{learn_coefs}
\end{figure}

\begin{figure}
    \centering
    \begin{subfigure}[b]{0.24\textwidth}
        \centering
        \includegraphics[height=0.9in]{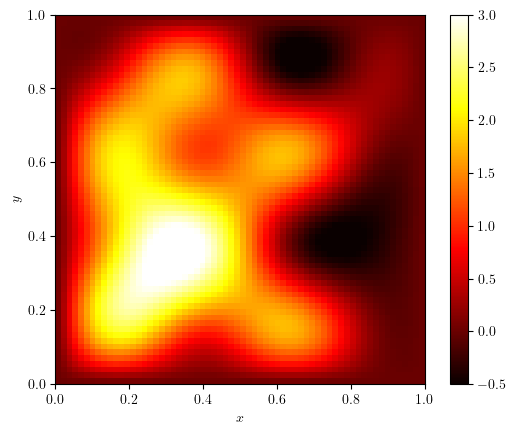}
        \caption{ID ground truth}
        \label{2Dsolexact}
    \end{subfigure}%
    \hfill
    \begin{subfigure}[b]{0.24\textwidth}
        \centering
        \includegraphics[height=0.9in]{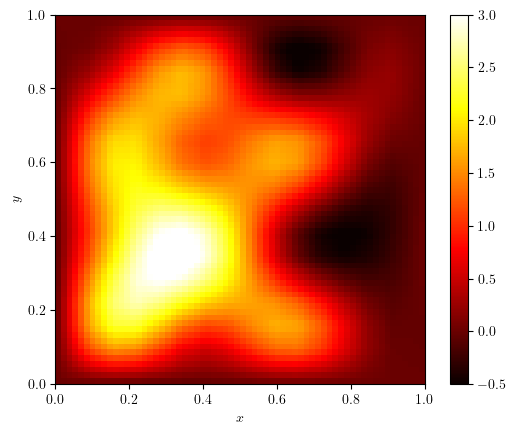}
        \caption{ID DeepFDM}
        \label{2DsolOurs}
    \end{subfigure}%
    \hfill
    \begin{subfigure}[b]{0.24\textwidth}
        \centering
        \includegraphics[height=0.9in]{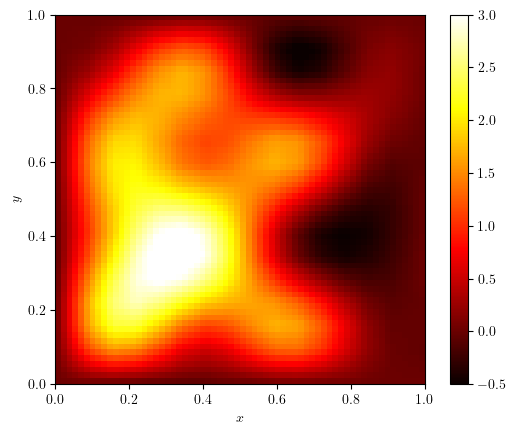}
        \caption{ID FNO}
        \label{2DsolNN}
    \end{subfigure}
    \hfill
    \begin{subfigure}[b]{0.24\textwidth}
        \centering
        \includegraphics[height=0.9in]{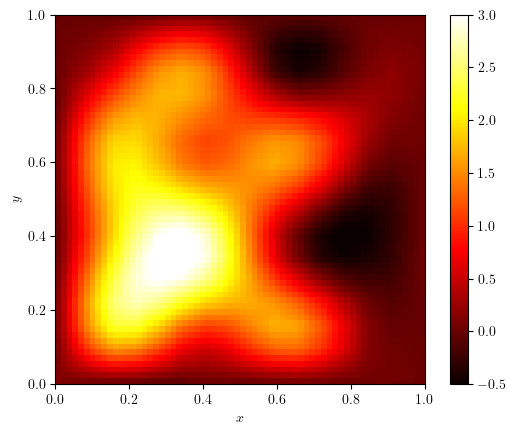}
        \caption{ID U-Net}
        \label{2DsolCNN}
    \end{subfigure}

    \medskip
    \begin{subfigure}[b]{0.24\textwidth}
        \centering
        \includegraphics[height=0.9in]{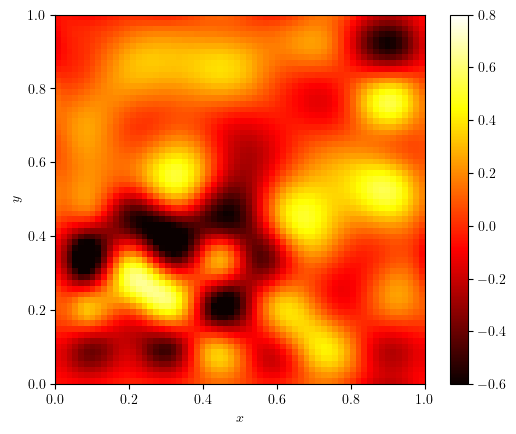}
        \caption{OOD ground truth}
        \label{2DsolexactOOD}
    \end{subfigure}%
    \hfill
    \begin{subfigure}[b]{0.24\textwidth}
        \centering
        \includegraphics[height=0.9in]{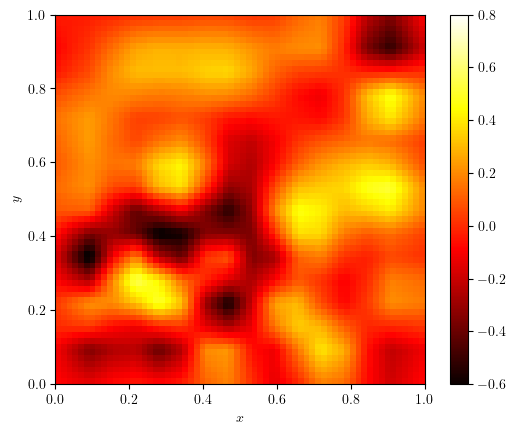}
        \caption{OOD DeepFDM}
        \label{2DsolOursOOD}
    \end{subfigure}%
    \hfill
    \begin{subfigure}[b]{0.24\textwidth}
        \centering
        \includegraphics[height=0.9in]{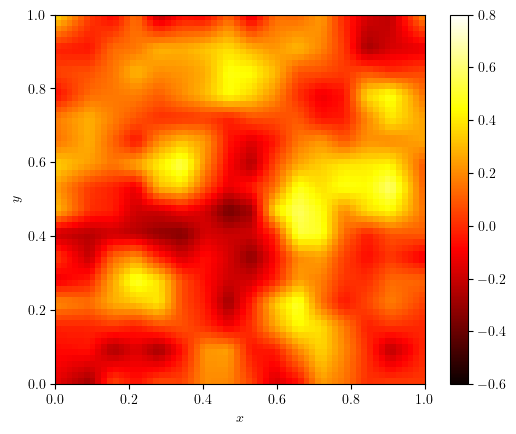}
        \caption{OOD FNO}
        \label{2DsolNNOOD}
    \end{subfigure}
    \hfill
    \begin{subfigure}[b]{0.24\textwidth}
        \centering
        \includegraphics[height=0.9in]{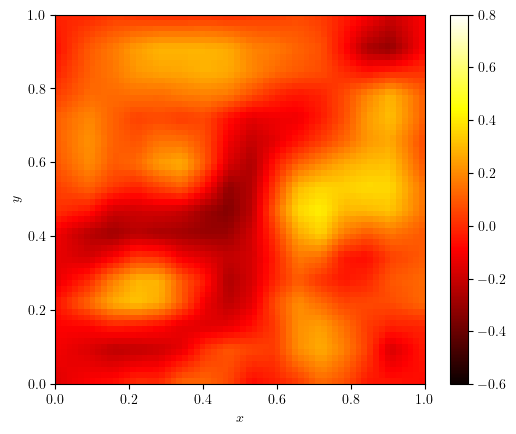}
        \caption{OOD U-Net}
        \label{2DsolCNNOOD}
    \end{subfigure}
  \caption{Two-dimensional solutions for a diffusion equation for both in-distribution (ID) data (top) and out-of-distribution (OOD) data (bottom). All models are visually similar on the in-distribution data. On OOD data, FNO and U-Net lose accuracy.}
  \label{2dFit}
\end{figure}

\section{Numerical results}\label{sec:numresults}
We used datasets generated by~\cite{liu2022predicting} for in-distribution data.  
For Out-of-distribution data, we used synthetic data generated by a numerical PDE solvers, using functions with random Fourier coefficients, as described in \autoref{sec:data}.

\paragraph{Considered benchmarks}
\textbf{U-Net:} We use a U-Net architecture, popular for image-to-image tasks, such as segmentation. We consider a 2D U-Net with 4 blocks~\cite{ronneberger2015u}.
\textbf{ResNet}: We use an 18 block ResNet with residual connections~\cite{he2016deep}.
\textbf{FNO:} We use a FNO with 12 modes for all channels and all experiments~\cite{Li:2020aa}.
All models are trained using the MSE loss function.

\paragraph{Parameter  count and training dynamics}
All models were trained with the Adam optimizer, with no weight decay. Training, validation, and test data samples were split as $75\%$, $12.5\%$, and $12.5\%$ respectively. All models were run on a Tesla T4 GPU with a batch size of $32$.  We can see from \autoref{tranining} that DeepFDM has the smoothest training curve among all models tested. 

The number of parameters in DeepFDM is on the order of the number of grid points (spatial data points) as shown in \autoref{parameters2d}, which is hundreds of times fewer than FNO and U-Net.  

 Training was performed using least squares vector regression, on the data.  DeepFDM trained to MSE $10^{-3}$ in a few epochs, and to $10^{-4}$ in under 100 epochs, which is significantly faster than the other methods. See~\autoref{tranining}.

\begin{table}[h!]
\centering
\begin{tabular}{ |l||rrrr| }
 \hline
Grid resolution   & $4,096$  & $1,024$ &  $256$ & $64$ \\
 \hline
 \hline
 Parameters in DeepFDM   & $20,484$  & $5,124$ &  $1,284$ & $324$\\
 Parameters in  FNO &  $184,666$  &  $184,666$  & $184,666$ & $184,666$\\
 Parameters in  U-Net &  $7,762,762$  &  $7,762,762$  & $7,762,762$ & $7,762,762$\\
 Parameters in  Res-Net &  $3,960$  &  $3,960$  & $3,960$ & $3,960$\\
 \hline
\end{tabular}
\caption{Model parameters for DeepFDM and the benchmark models tested.}
\label{parameters2d}
\end{table}

\subsection{Model accuracy}

We see from \autoref{acc} that, as expected, DeepFDM is more accurate, by a factor of $10$ in all considered equations. Furthermore, as expected, DeepFDM is more accurate in OOD performance; the solutions given by DeepFDM are visually accurate and have lower errors than the other benchmarked models. \autoref{2dFit} shows example modelled solutions in two dimensions. We note that in all Figures we exclude ResNet, since errors were higher than $60\%$.

\subsection{Out-of-distribution generalization and relative error measures}\label{erros}
 To quantify the generalization abilities of DeepFDM to distinct data distributions, we test on several distributions, each one further apart from the training distribution.  Because sampling from different distributions changes the statistics of the solutions, we need to normalize the errors across distributions.  This is done by normalizing each initial function to set the variance of the initial data (as a function of $\vx$) to be one.
Then, the relative errors are calculated as the average error of the predicted solutions (using the normalized initial data). The  is allows us to perform a fair comparison of errors across distributions.

 \autoref{OODfinal} shows the relative error of the models tested as a function of the Hellinger distance between the training and test distribution. We can see that as the test distribution is further apart from the training distribution, all models start losing accuracy but DeepFDM still achieves under $1\%$ relative error while both U-Net and FNO approach errors of $10\%$ for the furthest distributions. We tested this for all the equations shown in \autoref{acc}.

\begin{figure}
    \centering
    \includegraphics[height=2.8in]{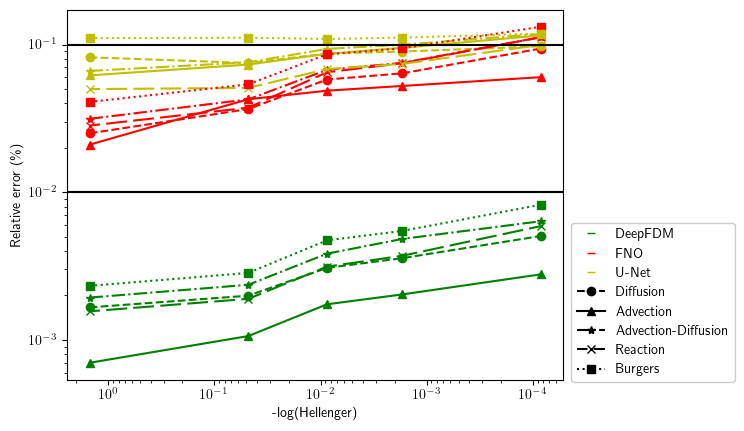}
    \caption{
Relative error of different models (y-axis) in terms of the Hellinger distance between the training and test distributions (x-axis). 
DeepFDM is the most accurate, achieving under $1\%$ relative error even under the largest dataset shift. On the other hand, FNO and U-Net significantly decrease their performance on distinct distributions, with relative errors approaching $10\%$.
    }
  \label{OODfinal}
\end{figure}

\subsection{Interpretable models: learning coefficients}\label{sec:interp}
In most cases, DeepFDM successfully learns a set of parameters that match the ground truth process.
\autoref{learn_coefs}, shows the case of a diffusion equation, where the coefficients are recovered from the model with high accuracy.
%

\subsection{Coefficient variance explains FNO errors}\label{subsec:FNOerrors}
In \autoref{coef-variance}, we report relative error for different coefficient values. The coefficient variance on the x-axis corresponds to the amplitudes of the sine waves used to generate the coefficients (larger amplitude corresponding to greater variance). We note that both FNO and U-Net see a degradation in performance as coefficient variance increases while DeepFDM has nearly constant performance.   By design DeepFDM is able to learn variable coefficient PDEs accurately. 

For the FNO, this can be explained by one of the underlying hypothesis of their model architecture; in order to perform computation in Fourier space, the authors make the assumption that the Green's function they learn is \textit{translation invariant}. Since variable coefficients are not translation invariant, as the variance of the coefficients grows, this hypothesis becomes less valid.  Thus we illustrated the bias of FNO towards translation invariant solutions. 

 \begin{figure}
     \centering
     \includegraphics[height=2.25in]{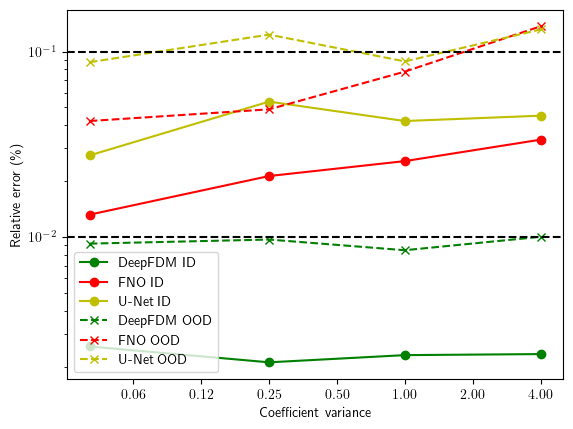}
     \caption{
 Relative error as a function of coefficient variance. The coefficient variance corresponds to the amplitude of the sine waves used to generate the coefficients (bigger variance means bigger amplitude). Our model shows constant error across coefficient size while FNO and U-Net see a performance drop as variance increases.
     }
     \label{coef-variance}
 \end{figure}

\begin{figure}
    \centering
    \begin{subfigure}[b]{0.48\textwidth}
        \centering
        \includegraphics[width=\linewidth]{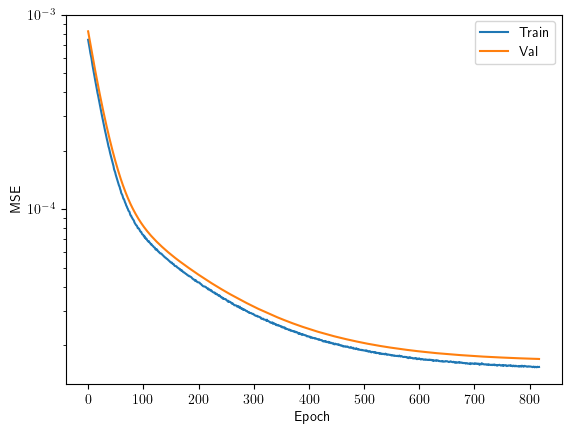}
        \caption{DeepFDM}
        \label{train ours}
    \end{subfigure}%
    \hfill
    \begin{subfigure}[b]{0.48\textwidth}
        \centering
        \includegraphics[width=\linewidth]{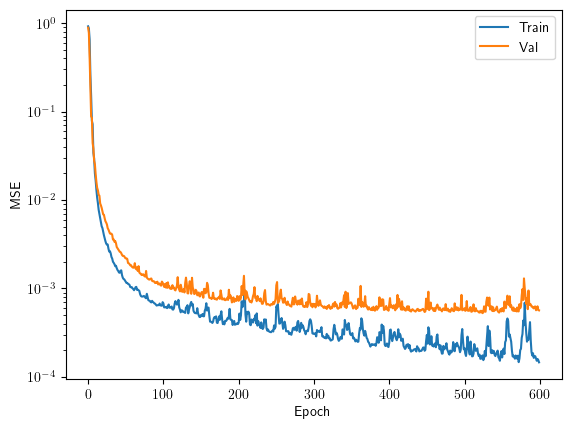}
        \caption{FNO}
        \label{train fno}
    \end{subfigure}

    \medskip
    \begin{subfigure}[b]{0.48\textwidth}
        \centering
        \includegraphics[width=\linewidth]{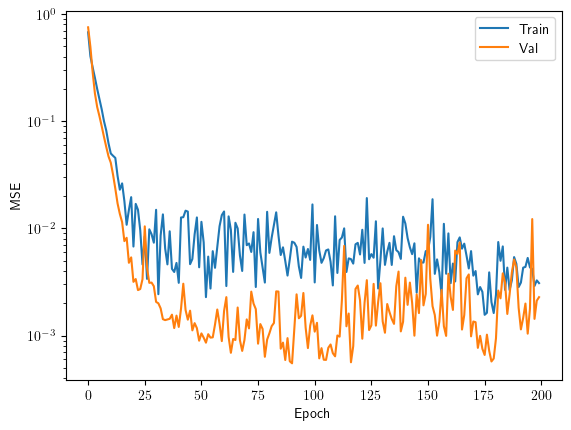}
        \caption{U-Net}
        \label{train unet}
    \end{subfigure}%
    \hfill
    \begin{subfigure}[b]{0.48\textwidth}
        \centering
        \includegraphics[width=\linewidth]{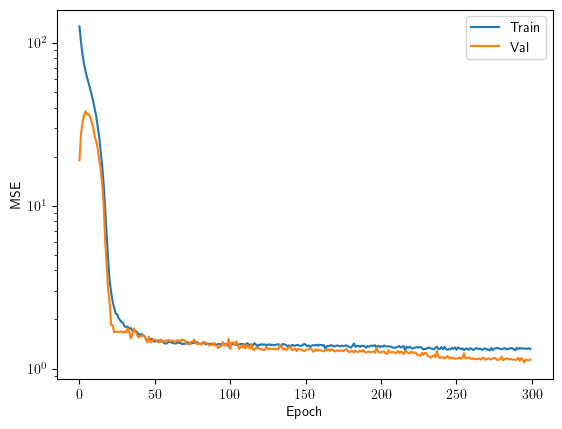}
        \caption{Res-Net}
        \label{train resnet}
    \end{subfigure}
  \caption{Training dynamics of the models tested. We can observe that DeepFDM has the smoothest training curve, leading to less variance in the results obtained.}
  \label{tranining}
\end{figure}

\section{Conclusion}
In this work, we introduced DeepFDM, a benchmark framework for comparing neural Partial Differential Equation (PDE) operators with traditional numerical solvers. While DeepFDM is not intended as a replacement for neural PDE solvers, it leverages the inherent structure of PDEs to offer improved accuracy, generalization, and interpretability, particularly in out-of-distribution (OOD) scenarios. Additionally, we proposed a method for generating and quantifying distribution shifts using the Hellinger distance, enabling robust performance evaluation across diverse PDE problems. Our results show that DeepFDM consistently outperforms neural operator methods in specific classes of PDEs, making it a valuable tool for benchmarking and advancing neural PDE operator research.


\section*{Acknowledgements}
AO and GR were supported by Canada CIFAR AI Chairs and NSERC Discovery programs.

\bibliographystyle{alpha}
\bibliography{references.bib}

\end{document}